\def\Ex{{\mathbb E}}
\def\E{{\mathbb E}}
\def\Pr{{\mathbb P}}
\def\er{{\mathbb R}}
\def\R{{\mathbb R}}
\def\calz{\mathcal{Z}}
\newcommand{\scal}[2]{\left\langle #1, #2\right\rangle}
\newtheorem{thm}{Theorem}
\newtheorem{prop}[thm]{Proposition}
\newtheorem{lem}[thm]{Lemma}
\newtheorem{cor}[thm]{Corollary}
\theoremstyle{remark}
\newtheorem{rem}[thm]{Remark}
\theoremstyle{definition}
\newtheorem{dfn}[thm]{Definition}
\title{Hadamard products and moments of random vectors}
\author{Rafa{\l} Lata{\l}a }
\author{Piotr Nayar}
\address{Institute of Mathematics \\ University of Warsaw\\ Banacha 2, 02-097, Warsaw, Poland}
\email{rlatala@mimuw.edu.pl, nayar@mimuw.edu.pl}
\thanks{The authors were supported by the National Science Centre, Poland, grant 2015/18/A/ST1/00553. P.N. was supported in part by the National Science Centre, Poland, grant 2018/31/D/ST1/01355.}
\subjclass[2010]{Primary: 60E15, Secondary: 52A20, 46B09}
\begin{document}

\newgeometry{tmargin=3cm, bmargin=3cm, lmargin=3cm, rmargin=3cm}

\maketitle

\begin{abstract}
We derive new comparison inequalities between weak and strong moments of norms of random vectors with optimal
(up to an universal factor) constants. We discuss applications to the concentration of log-concave random vectors and bounds on $p$-summing norms of finite rank operators.
\end{abstract}

\section{Introduction}

The study of moments of random variables is an essential issue of probability theory, one of the reasons being the fact that tail estimates for random variables are related
to bounds for their moments via the Markov inequality. In probabilistic convex geometry and concentration of measure theory one is  often interested in bounding $p$th moments of random vectors. 

To be more precise, the $p$th \emph{strong moment} of a random vector $X$ in $\R^n$ with respect to a given norm structure $(\R^n,\|\cdot\|)$ is defined as $M_{p}(X) = (\E\|X\|^p)^{1/p}$. Another related quantity is the so-called \emph{weak} $p$th \emph{moment} defined as $\sigma_{p}(X) = \sup_{\|t\|_\ast \leq 1}(\E |\scal{t}{X}|^p )^{1/p}$, where $\|\cdot\|_\ast$ denotes the dual norm.   
\begin{comment}
In the Euclidean space $\R^n$ any norm may be represented as supremum of linear functionals,
\[
	\|x\| = \sup_{t \in T}|\langle t,x\rangle|,
\] 
where $T$ is some subset of $\R^n$. For a random vector $X$ its \emph{strong} $p$th moment (with respect to the above norm) is defined as 
\[
	M_{p}(X) = (\E\|X\|^p)^{1/p} =  \left(\E \sup_{t \in T}|\langle t,X\rangle|^p \right)^{1/p}, \qquad p \geq 1.
\]
If in this expression one moves the supremum outside the expectation, one gets the so-called \emph{weak} $p$th moment
\[
	\sigma_{p}(X) = \sup_{t \in T}\left(\E |\langle t,X\rangle|^p \right)^{1/p}
\] 
corresponding to the set $T$. 
\end{comment}
\noindent Weak moments are usually much easier to compute or estimate, and so comparison inequalities between weak and strong moments are of interest in convex geometry, see e.g. \cite{BGVV}. While the one-sided estimate $\sigma_{p}(X) \leq M_{p}(X)$ follows trivially from the fact that $\|x\|=\sup_{\|t\|_\ast \leq 1} |\scal{t}{x}|$, obtaining the reverse bounds turns out to be much more challenging. As an example let us mention the Paouris inequality $M_{p}(X) \leq C (M_1(X)+ \sigma_p(X))$ valid for the standard Euclidean norm
%%the underlying set $T$ being the Euclidean ball of radius one (in which case $\|\cdot \|$ is the standard Euclidean norm) 
and arbitrary log-concave random vector $X$ in $\R^n$, see \cite{Pa} and \cite{ALOPT} (see also \cite{LS2} for an extension of this result to a larger class of norms).  Here and in the sequel $C$ denotes an 
absolute constant, whose value may change at each occurrence.

Usually, to compare weak and strong moments one applies the concentration of measure theory \cite{Le} or the chaining method \cite{TaMon}. What is crucial for such proofs is the regularity of the random vector $X$  and/or the special form of the norm.    
%However such approaches seem to fail if we do not make any further assumptions on the regularity of the distribution of  $X$.
\begin{comment}
 There is a trivial lower bound
\[
(\Ex\|X\|^p)^{1/p}=\left(\Ex\sup_{\|t\|_*\leq 1}|\langle t,X\rangle|^p\right)^{1/p}
\geq \sup_{\|t\|_*\leq 1}\left(\Ex|\langle t,X\rangle|^p\right)^{1/p}.
\]
\end{comment}

One may however wonder what is the minimal constant $C_{n,p}$ such
that the inequality $M_p(X) \leq C_{n,p} \sigma_p(X)$ holds for any $n$-dimensional random vector $X$ and any 
 norm on $\er^n$.
%\begin{equation}
%\label{eq:aim}
%(\Ex\|X\|^p)^{1/p}=\left(\Ex\sup_{\|t\|_*\leq 1}|\langle t,X\rangle|^p\right)^{1/p}
%\leq C_{n,p} \sup_{\|t\|_*\leq 1}\left(\Ex|\langle t,X\rangle|^p\right)^{1/p}.
%\end{equation}
First attempt to this problem was discussed in \cite{La}, where it was proved that for
unconditional random vectors such comparison holds with $C_{n,p}=C\sqrt{(n+p)/p}$. Our main result shows
that this inequality is valid without any symmetry assumptions.      

\begin{thm}
\label{thm_main}
For any $n$-dimensional random vector $X$ and any nonempty set $T$ in $\er^n$ we have
\begin{equation}\label{eq:main}
\left(\Ex\sup_{t\in T}|\langle t,X\rangle|^p\right)^{1/p}
\leq \ 2\sqrt{e} \sqrt\frac{n+p}{p}  \  \sup_{t\in T}\left(\Ex|\langle t,X\rangle|^p\right)^{1/p}\quad \mbox{ for }p\geq 2.
\end{equation}
In particular, for any normed space $(\R^n,\|\cdot\|)$ we have 
\[
(\E\|X\|^p)^{1/p}
\leq 2\sqrt{e} \sqrt\frac{n+p}{p} \sup_{\|t\|_*\leq 1}\left(\E|\langle t,X\rangle|^p\right)^{1/p}\quad \mbox{ for }p\geq 2.
\]
\end{thm}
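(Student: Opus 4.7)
The plan is to carry out standard reductions and then to establish a pointwise averaging inequality that allows swapping $\mathbb{E}$ and $\sup$ at the price of the claimed factor $\sqrt{(n+p)/p}$; the Hadamard-product structure enters in the construction of the averaging measure for a general convex body. First, I replace $T$ by its absolutely convex hull $\mathrm{conv}(T \cup -T)$, which leaves $\sup_{t\in T}|\langle t,X\rangle|$ unchanged and, by convexity of $t\mapsto\|\langle t,X\rangle\|_{L^p}$, also leaves $\sigma_p(X)$ unchanged. So I may assume $T$ is a symmetric convex body, and by positive homogeneity in $X$ I normalize so that $\sigma_p(X) = 1$, i.e.\ $\mathbb{E}|\langle t,X\rangle|^p \le 1$ for all $t \in T$.

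The core step is to construct a Borel probability measure $\mu$ supported on $T$ satisfying the pointwise averaging bound
\[
\sup_{t\in T}|\langle t,x\rangle|^p \;\le\; \kappa_{n,p}\,\int_T|\langle t,x\rangle|^p\,d\mu(t)\qquad\text{for all }x\in\mathbb{R}^n,
\]
with $\kappa_{n,p}^{1/p}\le 2\sqrt{e}\sqrt{(n+p)/p}$. In the model case $T = B_2^n$ this is classical: take $\mu$ the uniform surface measure on $S^{n-1}$ and use the computation $\mathbb{E}_{U\sim U(S^{n-1})}|U_1|^p \geq c(p/(en))^{p/2}$ valid for $p\ge 2$ (which follows from the identity $\mathbb{E}|U_1|^p = \mathbb{E}|g_1|^p/\mathbb{E}\|g\|_2^p$ for $g\sim N(0,I_n)$ and Stirling's formula); this gives $\kappa_{n,p}^{1/p}\asymp\sqrt{n/p}$, of the required order. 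For a general symmetric convex body $T$, the Hadamard-product technique announced in the paper's title plausibly constructs $\mu$ as the law of an expression of the shape $g\odot V$ (with $V$ drawn from a natural geometric measure on $T$ such as the cone measure on $\partial T$, and $g$ an independent standard Gaussian, suitably rescaled so the support stays in $T$), so that the independent Gaussian factor imparts the $\sqrt{p}$ scaling of directional $p$-th moments while $V$ keeps the support inside $T$.

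Given the averaging inequality, integration in $X$ and Fubini yield
\[
\mathbb{E}\sup_{t \in T} |\langle t,X\rangle|^p \;\le\; \kappa_{n,p}\int_T \mathbb{E}|\langle t,X\rangle|^p\,d\mu(t)\;\le\;\kappa_{n,p},
\]
since $\mu$ is supported in $T$ and every $t\in T$ satisfies $\mathbb{E}|\langle t,X\rangle|^p\le 1$. Taking $p$-th roots then completes the proof of \eqref{eq:main} with constant $2\sqrt{e}\sqrt{(n+p)/p}$, and the norm version is the special case $T=\{t:\|t\|_*\le 1\}$.

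The main obstacle is the construction of $\mu$ with the sharp constant for an arbitrary symmetric convex body $T$. A direct $\varepsilon$-net and union-bound argument yields only $\kappa_{n,p}^{1/p}$ of order $((n+p)/p)\cdot\exp(cn/p)$, losing an exponential factor $\exp(n/p)$ when $n\gg p$. The Hadamard-product trick is what must replace the polynomial Markov bound used inside the union bound by a Gaussian-type tail, saving exactly this exponential and producing the claimed $\sqrt{(n+p)/p}$; tracking the explicit constant $2\sqrt{e}$ through this construction is the technical heart of the argument.
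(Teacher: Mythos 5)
Your proposal has a genuine gap where the theorem's content actually lives. The pointwise averaging inequality
\[
\sup_{t\in T}|\langle t,x\rangle|^p \;\le\; \kappa_{n,p}\int_T|\langle t,x\rangle|^p\,d\mu(t)\qquad\text{for all }x\in\mathbb{R}^n
\]
would indeed imply the theorem by Fubini, and it is even equivalent to the theorem by a minimax argument (test the theorem on $X$ supported on $\partial T^\circ$ and apply Sion/von Neumann to the bilinear functional $(\mu,\nu)\mapsto\iint|\langle t,x\rangle|^p\,d\mu(t)\,d\nu(x)$). But equivalence is not a proof, and you never construct $\mu$. You verify the Euclidean case $T=B_2^n$ with the uniform spherical measure, which is correct, and then write that the Hadamard trick ``plausibly constructs $\mu$ as the law of an expression of the shape $g\odot V$''; this is speculation, not an argument, and you acknowledge as much in your last paragraph. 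As written the proof does not close.

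Moreover, the mechanism you guess at is not the one in the paper, and the Hadamard product there is a different object than what you have in mind: it is the Schur (entrywise) product of matrices, not a coordinatewise product of random vectors. The paper first shows (Lemma \ref{lem:main}) that the statement is equivalent to a purely combinatorial inequality for $k\times l$ matrices $A$ of rank at most $n$, namely
\[
\Bigl(\sum_j\sup_i|a_{ij}|^p\Bigr)^{1/p}\le C_{n,p}\sup_i\Bigl(\sum_j|a_{ij}|^p\Bigr)^{1/p}.
\]
For $p=2$ this is handled directly with $C_{n,2}=\sqrt n$ by a short covariance computation. For $p=2m$ one passes to the $m$th Hadamard power $A^{\circ m}$: writing $A=\sum_{r=1}^n u_r\otimes v_r$ and expanding, $A^{\circ m}$ is a sum of at most $\binom{n+m-1}{m}$ rank-one terms, so $\mathrm{rank}(A^{\circ m})\le\binom{n+m-1}{m}$, and the $p=2$ case applied to $A^{\circ m}$ gives $C_{n,2m}=\binom{n+m-1}{m}^{1/(2m)}$. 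Finally, for $p\in(2m,2m+2]$ one interpolates by H\"older with a weighted reformulation, and a calculus estimate shows $C_{n,2m}\le 2\sqrt e\sqrt{(n+p)/p}$. This rank-reduction route sidesteps entirely the construction of a measure on $T$ that you identify (correctly) as the sticking point; if you want to salvage your plan, you would have to produce $\mu$ explicitly, and I do not know a direct construction that avoids redoing the paper's work in disguise.
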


\noindent The proof of the main result uses elementary linear algebra, namely a Hadamard power trick inspired by the proof of the so-called Welch bound (see \cite{Welch}) given in  \cite{DSD}. 

\begin{rem} 
\label{rem_zp}
Let us notice that by homogeneity one can always assume that the supremum on the right hand side of \eqref{eq:main} is one. Then by enlarging the set $T$ we may assume that $T$ is the set of all vectors $t$ satisfying $\E|\scal{t}{X}|^p \leq 1$. Thus, inequality  \eqref{eq:main} may be equivalently stated as
\begin{equation}
\label{eq:momZp}
\left(\Ex\|X\|_{\calz_p(X)}^p\right)^{1/p}\leq 2\sqrt{e} \sqrt\frac{n+p}{p}, 
\end{equation}
where
\[
\|s\|_{\calz_p(X)}:=\sup\{|\langle t,s\rangle|\colon\ \Ex|\langle t,X\rangle|^p\leq 1\}.
\]
This has been conjectured (with a universal constant in place of $2\sqrt{e}$) by the second named author in  \cite{La} (see Problem 1 therein). Thus Theorem \ref{thm_main} positively resolves this conjecture.
\end{rem}

\begin{rem}
It is not hard to check (see \cite{La} for details) that for rotationally invariant $n$-dimensional random vector 
$X$ with finite $p$th moment one has 
\[
\left(\Ex\|X\|_{\calz_p(X)}^p\right)^{1/p}=\left(\Ex|U_1|^p\right)^{-1/p}\sim \sqrt\frac{n+p}{p},
\]
where $U_1$ is the first coordinate of the random vector $U$ uniformly distributed on the unit sphere. Therefore the bound in Theorem \ref{thm_main} is sharp up to a universal multiplicative factor.
\end{rem}

\begin{rem}
In general one cannot reverse bound \eqref{eq:momZp} for $2\ll p\ll n$. Indeed, let $e_1,\ldots,e_n$ be the canonical basis of $\er^n$ and $\Pr(X=\pm e_i)=1/(2n)$ for $1\leq i\leq n$. Then for $s,t\in \er^n$,
\[
\Ex|\langle t,X\rangle|^p=\frac{1}{n}\sum_{i=1}^n|t_i|^p,\quad
\|s\|_{\calz_p(X)}=n^{1/p}\left(\sum_{i=1}^n|s_i|^q\right)^{1/q},
\]
where $q$ denotes the H\"older dual to $p$. Thus for $2\ll p\ll n$,
\[
\left(\Ex\|X\|_{\calz_p(X)}^p\right)^{1/p}=n^{1/p}\ll \sqrt\frac{n+p}{p}.
\]
We think however that \eqref{eq:momZp} may be reversed in the case of symmetric log-concave vectors (see the discussion in Section \ref{sec:openq}).
\end{rem}

Inequality \eqref{eq:main} arose as a result of investigating optimal concentration of measure inequalities. The discussion of this and other applications will be given is Section \ref{sec:apl}. Section \ref{sec:proof} is devoted to the proof of Theorem \ref{thm_main}. We conclude with reviewing some related open problems in Section 
\ref{sec:openq}.

\begin{comment}
\begin{rem}
\label{rem_zp}
Theorem \ref{thm_main} may be equivalently stated in the following form: for any $n$-dimensional random vector $X$ and $p\geq 2$ we have
\[
\left(\Ex\|X\|_{\calz_p(X)}^p\right)^{1/p}\leq \sqrt{2e}\sqrt\frac{n+p}{p},
\]
where for $s\in \er^n$ we set
\[
\|s\|_{\calz_p(X)}:=\sup\{|\langle t,s\rangle|\colon\ \Ex|\langle t,X\rangle|^p\leq 1\}.
\]
Thus we have positively solved Problem 1 from.
\end{rem}
\end{comment}

\section{Proof of the main result}\label{sec:proof}

The following Lemma gives three equivalent formulations of our main inequality. 

\begin{lem}
\label{lem:main}
Let $1\leq p<\infty$ and $n$ be a positive integer. The following conditions are equivalent:\\
i) For any $n$-dimensional random vector $X$ and any nonempty set $T$ in $\er^n$ we have
\[
\left(\Ex\sup_{t\in T}|\langle t,X\rangle|^p\right)^{1/p}
\leq C_{n,p}\sup_{t\in T}\left(\Ex|\langle t,X\rangle|^p\right)^{1/p}.
\]
ii) For any $k,l\geq 1$ and any vectors $t_1,\ldots,t_k$ and $x_1,\ldots,x_l$ in $\er^n$ we have
\[
\left(\sum_{j=1}^l\sup_{1\leq i\leq k}|\langle t_i,x_j\rangle|^p\right)^{1/p}
\leq C_{n,p}\sup_{1\leq i\leq k}\left(\sum_{j=1}^l|\langle t_i,x_j\rangle|^p\right)^{1/p}.
\]
iii) For any $k,l\geq 1$ and any matrix $(a_{ij})_{i\leq k,j\leq l}$ of rank at most $n$ we have
\[
\left(\sum_{j=1}^l\sup_{1\leq i\leq k}|a_{ij}|^p\right)^{1/p}
\leq C_{n,p}\sup_{1\leq i\leq k}\left(\sum_{j=1}^l|a_{ij}|^p\right)^{1/p}.
\]
\end{lem}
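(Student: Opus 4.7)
The plan is to establish the cyclic chain of implications $(i) \Rightarrow (ii) \Rightarrow (iii) \Rightarrow (i)$. The first and third are the genuine content; $(ii) \Leftrightarrow (iii)$ is essentially a reformulation via matrix factorization.

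\textbf{Step 1: $(i) \Rightarrow (ii)$.} The idea is to make $X$ and $T$ discrete. Apply (i) with $T$ taken to be the finite set $\{t_1,\ldots,t_k\}$ and with $X$ the random vector uniformly distributed on $\{x_1,\ldots,x_l\}$, i.e.\ $\Pr(X = x_j) = 1/l$. Under this choice,
\[
\Ex\sup_{1\leq i\leq k}|\langle t_i,X\rangle|^p=\frac{1}{l}\sum_{j=1}^l\sup_{1\leq i\leq k}|\langle t_i,x_j\rangle|^p,\quad
\Ex|\langle t_i,X\rangle|^p=\frac{1}{l}\sum_{j=1}^l|\langle t_i,x_j\rangle|^p.
\]
The common factor $(1/l)^{1/p}$ cancels and (ii) drops out.

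\textbf{Step 2: $(ii) \Leftrightarrow (iii)$.} This is the translation between ``inner product form'' and ``low rank form''. Given a $k\times l$ matrix $A=(a_{ij})$ of rank at most $n$, factor it as $A=UV$ with $U$ of size $k\times n$ and $V$ of size $n\times l$ (pad with zero columns/rows if the rank is strictly less than $n$). Let $t_i\in\er^n$ be the $i$-th row of $U$ and $x_j\in\er^n$ the $j$-th column of $V$; then $a_{ij}=\langle t_i,x_j\rangle$, and (ii) yields (iii). The converse is immediate because $(\langle t_i,x_j\rangle)_{i,j}$ always has rank at most $n$.

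\textbf{Step 3: $(iii) \Rightarrow (i)$.} This is the step where approximation has to be done carefully and is the main obstacle. First reduce to the case where $T$ is finite: since $\sup_{t\in T_0}|\langle t,X\rangle|^p$ increases with $T_0$, monotone convergence gives
\[
\Ex\sup_{t\in T}|\langle t,X\rangle|^p=\sup\bigl\{\Ex\sup_{t\in T_0}|\langle t,X\rangle|^p\colon T_0\subseteq T,\ T_0\text{ finite}\bigr\},
\]
so it suffices to prove (i) for finite $T=\{t_1,\ldots,t_k\}$. Next reduce to finitely supported $X$: by truncating $X$ on $\{|X|\leq R\}$ and partitioning this set into small cells, one approximates $X$ by discrete vectors $X^{(m)}$ taking finitely many values $x_1^{(m)},\ldots,x_{l_m}^{(m)}$ with probabilities $p_j^{(m)}$, in such a way that both $\Ex\sup_i|\langle t_i,X^{(m)}\rangle|^p$ and each $\Ex|\langle t_i,X^{(m)}\rangle|^p$ converge to the corresponding quantities for $X$. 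For each such discrete $X^{(m)}$, apply (iii) to the $k\times l_m$ matrix
\[
a_{ij}^{(m)}=(p_j^{(m)})^{1/p}\langle t_i,x_j^{(m)}\rangle,
\]
which has rank at most $n$ since it factors as $U D$ with $U_{im}=t_{i,m}$ the $k\times n$ matrix of the $t_i$'s, times a weighted $n\times l_m$ matrix of $x_j^{(m)}$'s scaled by $(p_j^{(m)})^{1/p}$. The inequality in (iii) translates exactly into the desired inequality for $X^{(m)}$, and passing to the limit $m\to\infty$ (and then $R\to\infty$) yields (i).

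The only nontrivial step is the measure-theoretic approximation in Step 3; the care needed is to ensure that $\sup_i|\langle t_i,\cdot\rangle|^p$ is a continuous unbounded function, which is handled by the standard truncate-then-discretize procedure. Everything else is a clean translation between the three formulations.
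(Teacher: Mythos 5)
Your argument is correct and is essentially the paper's approach: the paper also reduces (i) to finite $T$ and finitely supported $X$ by approximation (it just says "simple approximation argument" where you give the truncate-and-discretize details), and it also passes between (ii) and (iii) via the rank factorization $A=TX$. The only cosmetic difference is that you absorb non-uniform probabilities into the matrix via the $p_j^{1/p}$ scaling, whereas the paper reduces to an equidistributed (uniform) $X$; both are standard and interchangeable.
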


\begin{proof}
i)$\iff$ ii) Simple approximation argument shows that it is enough to show i) for a finite set $T$ and a random vector $X$ equidistributed over a finite subset of $\er^n$.

ii)$\iff$iii)  Matrix $A:=(\langle t_i,x_j\rangle)_{i\leq k,j\leq l}$ is a product $TX$, where $T$ is a $k \times n$ matrix whose rows are the vectors $t_i$ and $X$ is an $n \times l$ matrix whose columns are the vectors $x_j$. Thus it is a matrix of a linear map $\R^l \to \R^n \to \R^k$ and so it has rank at most $n$.   Moreover any $k \times l$ matrix of rank at most $n$ may be represented as such a product $TX$ due to the rank factorization theorem.  
\end{proof}

%\begin{lem}
%\label{lem:p=2}
%Conditions i)-iii) hold for $p=2$ with $C_{n,2}=\sqrt{n}$.
%\end{lem}

The proof of Theorem \ref{thm_main} is divided into three steps.

\begin{prop} Conditions i)-iii) hold
\label{lem:p=2k}
\begin{itemize}
	\item[(a)]  for $p=2$ with $C_{n,2}=\sqrt{n}$,  
	\item[(b)]  for $p=2m$, $m=1,2,\ldots$ with $C_{n,2m}=\binom{n+m-1}{m}^{1/(2m)}$,
	\item[(c)]  for $p \in (2m,2m+2]$, $m=1,2,\ldots$ with 
                    $C_{n,p}=C_{n,2m}=\binom{n+m-1}{m}^{1/(2m)}\leq 2\sqrt{e}\sqrt\frac{n+p}{p}$.
\end{itemize}

\end{prop}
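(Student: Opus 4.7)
My plan is to prove the three parts sequentially, each reducing to the previous via an auxiliary trick. For (a), I would use Cauchy--Schwarz with the Gram matrix of the rank factorization. By Lemma~\ref{lem:main}, it suffices to work in the matrix form (iii). Write $A = TX$ with $T \in \R^{k \times n}$, $X \in \R^{n \times l}$, so that $a_{ij} = \langle t_i, x_j\rangle$, and set $M = XX^T$. Cauchy--Schwarz in the $M$-inner product yields $a_{ij}^2 \leq (t_i^T M t_i)(x_j^T M^{-1} x_j)$ (working in the range of $M$ if $M$ is singular); taking $\max_i$, summing over $j$, and using $\sum_j x_j x_j^T = M$ so that $\sum_j x_j^T M^{-1} x_j = \mathrm{tr}(I_n) = n$, together with $t_i^T M t_i = \sum_j a_{ij}^2$, gives (a) with $C_{n,2} = \sqrt{n}$.

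For (b), I would reduce to (a) via the Hadamard-power trick. Consider $B = A^{\circ m}$ with $b_{ij} = a_{ij}^m = \langle t_i, x_j\rangle^m$; by the multinomial expansion, $b_{ij}$ is an inner product of the symmetric $m$-tensors $t_i^{\otimes m}, x_j^{\otimes m}$ lying in $\mathrm{Sym}^m(\R^n)$, which has dimension $\binom{n+m-1}{m}$. Hence $\mathrm{rank}(B) \leq \binom{n+m-1}{m}$, and (a) applied to $B$, combined with $b_{ij}^2 = a_{ij}^{2m}$ and extraction of a $(2m)$-th root, yields (b).

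For (c), the main obstacle is extending from the integer exponent $2m$ to the continuous range $p \in (2m, 2m+2]$ without worsening the constant; my plan is a diagonal-weighting trick. Set $u_j = \max_i |a_{ij}|$ and introduce weights $w_j = u_j^{(p-2m)/(2m)}$; the matrix $\widetilde A = A \cdot \mathrm{diag}(w_1, \ldots, w_l)$ still has rank at most $n$. Applying (b) to $\widetilde A$ produces
\[
\Bigl(\sum_j u_j^p\Bigr)^{1/(2m)} \,\leq\, C_{n, 2m} \max_i \Bigl(\sum_j u_j^{p-2m} |a_{ij}|^{2m}\Bigr)^{1/(2m)}.
\]
H\"older's inequality with conjugate exponents $p/(p-2m)$ and $p/(2m)$ bounds the inner sum by $\|u\|_p^{p-2m}\|a_{i\cdot}\|_p^{2m}$; cancelling the common factor $\|u\|_p^{(p-2m)/(2m)}$ (the case $\|u\|_p = 0$ being trivial) then gives $\|u\|_p \leq C_{n, 2m} \max_i \|a_{i\cdot}\|_p$, which is (iii) at $p$. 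For the numerical estimate $C_{n, 2m} \leq 2\sqrt{e}\sqrt{(n+p)/p}$ on $p \in (2m, 2m+2]$, I would use $\binom{n+m-1}{m} \leq (e(n+m-1)/m)^m$ to reduce matters to the elementary inequality $n(p-4m) \leq p(3m+1)$, which holds since $p - 4m \leq 2 - 2m \leq 0$ for $m \geq 1$.
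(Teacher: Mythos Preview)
Your proposal is correct and follows essentially the same route as the paper: the Gram/covariance-matrix Cauchy--Schwarz for $p=2$, the Hadamard-power (symmetric-tensor) rank bound to pass from $p=2$ to $p=2m$, and a weighting plus H\"older argument to interpolate to $p\in(2m,2m+2]$, followed by the standard $\binom{n+m-1}{m}\le (e(n+m-1)/m)^m$ estimate. The only cosmetic differences are which of the equivalent formulations (i)--(iii) from Lemma~\ref{lem:main} is used at each step (you work in the matrix form throughout, whereas the paper switches between the probabilistic and matrix pictures), and you phrase the rank bound via $\mathrm{Sym}^m(\R^n)$ rather than counting nondecreasing multi-indices.
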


\begin{proof}
(a) We will show condition i).
Without loss of generality we may assume that a vector $X$ is symmetric, bounded and has a nondegenerate covariance
matrix $C$. Let $\alpha:=\sup_{t\in T}\Ex|\langle t,X\rangle|^2=\sup_{t\in T}\langle Ct,t\rangle$. Then
\begin{align*}
\Ex\sup_{t\in T}|\langle t,X\rangle|^2& \leq \Ex\sup\{|\langle s,X\rangle|^2\colon \langle Cs,s\rangle\leq \alpha\} \\
& = \Ex\sup\{|\langle C^{1/2}s,C^{-1/2} X\rangle|^2\colon  |C^{1/2}s|^2 \leq \alpha\} 
 = \alpha\Ex |C^{-1/2}X|^2  =\alpha n.
\end{align*}

(b) We will establish condition iii).
Let $A$ be an $k \times l$ matrix of rank at most $n$. Then by  the rank factorization theorem $A$ can be written in the form $A=TX$, where $T$ is $k \times n$ and $X$ is $n \times l$. Then  $A=\sum_{r=1}^n u_r\otimes v_r$, where $u_r$ in $\R^k$ are column vectors of $T$ and $v_r$ in $\R^l$ are row vectors of $X$, $r=1,\ldots,n$. Then the $m$th Hadamard power of $A$ equals
\begin{align*}
A^{\circ m}  =\left(\sum_{r=1}^n u_r\otimes v_r\right)^{\circ m}
&  =\sum_{r_1,\ldots,r_m=1}^n \left(u_{r_1}\otimes v_{r_1}\right)\circ\ldots\circ \left(u_{r_m}\otimes v_{r_m}\right)
\\ 
& =\sum_{r_1,\ldots,r_m=1}^n  \left(u_{r_1}\circ\ldots\circ u_{r_m}\right)\otimes  
\left(v_{r_1}\circ\ldots\circ v_{r_m}\right).
\end{align*}
Observe that Hadamard product is commutative so we may restrict the sum to nondecreasing $m$-tuples 
$(r_1,\ldots,r_m)$ with values in $\{1,\ldots,n\}$. Number of such tuples is $\binom{n+m-1}{m}$. Thus
matrix $A^{\circ m}$ has rank at most $\binom{n+m-1}{m}$ (see \cite{peng} for a similar arguments for Hadamard powers of Gram matrices). We conclude by using part (a) for matrix $A^{\circ m}$.

(c) We will verify condition ii). Let $p_j=\max_{i\leq k}|\langle t_i,x_j\rangle|^{p-2m}$, by the homogenity we may
assume that $\sum_{j=1}^l p_j=1$. Let $X$ be a random variable such that $\Pr(X=x_j)=p_j$, Lemma \ref{lem:p=2k}
yields
\begin{align*}
\sum_{j=1}^l\sup_{1\leq i\leq k}&|\langle t_i,x_j\rangle|^p
=\Ex\sup_{1\leq i\leq k}|\langle t_i,X\rangle|^{2m}
\leq C_{n,2m}^{2m}\sup_{1\leq i\leq k}\Ex|\langle t_i,X\rangle|^{2m}
\\
&=C_{n,2m}^{2m}\sup_{1\leq i\leq k}\sum_{j=1}^{l}|\langle t_i,x_j\rangle|^{2m}p_j
\\
&\leq C_{n,2m}^{2m}\sup_{1\leq i\leq k}\left(\sum_{j=1}^{l}|\langle t_i,x_j\rangle|^{p}\right)^{2m/p}
\left(\sum_{j=1}^lp_j^{p/(p-2m)}\right)^{(p-2m)/p}
\\
&=C_{n,2m}^{2m}\sup_{1\leq i\leq k}\left(\sum_{j=1}^{l}|\langle t_i,x_j\rangle|^{p}\right)^{2m/p}
\left(\sum_{j=1}^l\sup_{1\leq i\leq k}|\langle t_i,x_j\rangle|^p\right)^{(p-2m)/p},
\end{align*}
where the second inequality follows by the H\"older inequality. After rearranging we get the desired bound. 

To conclude observe that
\[
C_{n,2m}^2\leq \frac{e(n+m-1)}{m}\leq e\frac{n+p/2}{p/4}\leq 4e\frac{n+p}{p}.
\]
\end{proof}

%\begin{lem}
%\label{lem:pneq2k}
%Let $2k<p\leq 2k+2$, $k=1,2,\ldots$. Then conditions i)-iii) hold for  with 
%$C_{n,p}=C_{n,2k}=\binom{n+k-1}{k}^{1/(2k)}\leq \sqrt{2e}\sqrt\frac{n+p}{p}$.
%\end{lem}

\section{Applications}\label{sec:apl}

\subsection*{Concentration inequalities} 

Let $\nu$ be a symmetric exponential measure with parameter $1$, i.e.\ the measure on the real line 
with the density $\frac{1}{2}e^{-|x|}$. Talagrand \cite{Ta} showed that the product measure $\nu^n$
satisfies the following two-sided concentration inequality
\[
\forall_{A\in \mathcal{B}(\er^n)}\ \forall_{p>0}\
\nu^n(A)\geq \frac{1}{2}\ \Rightarrow\  
1-\nu^n(A+C\sqrt{p}B_2^n+CpB_1^n)\leq e^{-p}(1-\nu^n(A)).
%% \quad 
%%\mbox{for any Borel set $A$ such that $\nu^d(A)\geq \frac{1}{2}$ and  $t\geq 0$},
\]
where by a letter $C$ here and in the sequel we denote universal constants.

This is a remarkably strong concentration result implying, for example, the celebrated concentration of measure phenomenon for the canonical Gaussian measure $\gamma_n$ on $\er^n$:
\[
\forall_{A\in \mathcal{B}(\er^n)}\ \forall_{p>0}\
\gamma_n(A)\geq \frac{1}{2}\ \Rightarrow\  
1-\gamma_n(A+C\sqrt{p}B_2^n)\leq e^{-p}(1-\gamma_n(A)),
\]
discovered (in the sharp isoperimetric form) by Sudakov and Tsirelson in \cite{SC74}, and independently by Borell in \cite{B75}.

It is not hard to check that $\calz_p(\nu^n)\sim \sqrt{p}B_2^n+pB_1^n$ and $\calz_p(\gamma_n)\sim \sqrt{p}B_2^n$
for $p\geq 2$, where
for a probability measure $\mu$ on $\er^n$ and a random vector $X$ distributed according to $\mu$ we set 
\[
\calz_p(\mu)=\calz_p(X)=\{t\in \er^n\colon\ \|t\|_{\calz_p(X)}\leq 1\}.
\]

In the context of convex geometry it is natural to ask if similar inequalities hold for other log-concave measures, namely measures with densities of the form $e^{-V}$, where $V:\R^n \to R$ is convex.  
An easy observation from \cite{LW} shows that if $\mu$ is a symmetric log-concave probability measure
and $K$ is a convex set such that for any halfspace $A$ satisfying $\mu(A)\geq \frac{1}{2}$ we have $\mu(A+K)\geq 1- \frac{1}{2}e^{-p}$, then necesarily $K\supset c\calz_{p}$. This motivates the following definiton proposed in \cite{LW} by Wojtaszczyk and the second named author. 

\begin{dfn}
We say that a measure $\mu$ satisfies the {\em optimal concentration inequality with
constant $\beta$} (\! $\mathrm{CI}(\beta)$ in short) if for any Borel set $A$ we have 
\[
%%\label{in_CI}
\mu(A)\geq \frac{1}{2}\quad \implies \quad
1-\mu(A+\beta \calz_{p}(\mu))\leq e^{-p}(1-\mu(A)), \qquad p \geq 2.
\]
\end{dfn}

\noindent All centered product log-concave measures satisfy the optimal concentration inequality with a universal constant $\beta$ (\cite{LW}). A natural conjecture (discussed in \cite{LW,LaIMA}) states that this is true also for nonproduct measures. However, one has to mention that it would imply (see Corollary 3.14. in \cite{LW}) the celebrated KLS conjecture (proposed in \cite{KLS} as a tool for proving efficiency of certain Metropolis type algorithms for computing volumes of convex sets) on the boundedness of the Cheeger constant for isotropic log-concave measures . It was shown in \cite{LaIMA} that every log-concave measure on $\R^n$ satisfies $CI(c\sqrt{n})$ with a universal constant $c$. The following corollary improves upon this bound. 

\begin{cor}
Every centered log-concave probability measure on $\er^n$ satisfies the optimal concentration inequality
with constant $\beta\leq Cn^{5/12}$.
\end{cor}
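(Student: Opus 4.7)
The plan is to revisit the proof of the $\mathrm{CI}(c\sqrt{n})$ bound from \cite{LaIMA} using Theorem~\ref{thm_main} as a drop-in replacement for the moment estimate on $\|X\|_{\calz_p(X)}$ that enters the argument there. Applied to an arbitrary centered $n$-dimensional log-concave vector $X$, Remark~\ref{rem_zp} yields
\[
\bigl(\Ex \|X\|_{\calz_p(X)}^p\bigr)^{1/p} \le 2\sqrt{e}\sqrt{\tfrac{n+p}{p}},
\]
which is strictly better than the trivial $\sqrt{n}$ whenever $p$ is at least a small power of $n$, and is of constant order once $p\gtrsim n$.

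Next I would invoke Borell's lemma: since $X$ is log-concave and $\|\cdot\|_{\calz_p(X)}$ is a seminorm, its $L^p$-norm controls its tail exponentially, and Markov's inequality upgrades the displayed bound to $\Pr\bigl(\|X\|_{\calz_p(X)} \ge Ce\sqrt{(n+p)/p}\bigr) \le e^{-p}$. Passing from such pointwise tail estimates to the set-enlargement formulation of $\mathrm{CI}(\beta)$ is precisely the type of reduction carried out in \cite{LaIMA}; running the same reduction with the improved input should yield a $p$-dependent concentration constant $\beta_p \le C\sqrt{(n+p)/p}$ at each level $p\ge 2$.

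The main obstacle is that $\mathrm{CI}(\beta)$ demands a single $\beta$ valid simultaneously for \emph{every} $p\ge 2$, and $\sqrt{(n+p)/p}$ returns to $\sqrt{n}$ at $p=2$. Thus Theorem~\ref{thm_main} on its own cannot beat the \cite{LaIMA} bound — the gain it provides lies entirely in the large-$p$ regime. To break past $\sqrt{n}$ and reach the target $n^{5/12}$ one must combine the new moment estimate with a second ingredient adapted to small $p$, most plausibly a thin-shell or Cheeger-type bound for log-concave measures (via stochastic localization or related machinery), and then balance the two regimes by optimizing over a cutoff in $p$. The exponent $5/12$ presumably corresponds to the optimum of that trade-off, and the interpolation step — identifying precisely which auxiliary log-concave estimate to feed in at the small-$p$ end, and how it combines with Theorem~\ref{thm_main} — is where I expect the real work to lie.
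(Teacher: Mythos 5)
Your plan tracks the paper's proof closely, and your guess that the small-$p$ ingredient is a Cheeger-type bound obtained via stochastic localization is exactly right; the piece you left unspecified is the Lee-Vempala $Cn^{1/4}$ bound on the Cheeger constant of isotropic log-concave measures. Concretely, the paper first normalizes to the isotropic case (the CI constant is linear-invariant) and uses Proposition 2.7 of \cite{LW} to restate $\mathrm{CI}(\beta)$ as the requirement $\mu(A+\beta\calz_p(\mu))\geq\min\{1/2,e^p\mu(A)\}$ for all $p\geq 2$, then splits into the two regimes you describe. For $2\leq p\leq n^{1/6}$ one uses the inclusion $\calz_p(\mu)\supset\calz_2(\mu)=B_2^n$ (valid in the isotropic normalization) so that $Cn^{5/12}\calz_p(\mu)\supset Cn^{1/4}p\,B_2^n$, and the Lee-Vempala Cheeger bound supplies the needed enlargement inequality $\mu(A+Cn^{1/4}p\,B_2^n)\geq\min\{1/2,e^p\mu(A)\}$. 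For $p\geq\max\{2,n^{1/6}\}$, Remark~\ref{rem_zp} together with Markov gives $\mu\bigl(2e^{3/2}\sqrt{(n+p)/p}\,\calz_p(\mu)\bigr)\geq 1-e^{-p}$, and Lemma~9 of \cite{LaIMA} converts this tail estimate into the set-enlargement bound with $\beta_p\lesssim\sqrt{(n+p)/p}\lesssim n^{5/12}$ in that range. The cutoff $p=n^{1/6}$ is precisely where $n^{1/4}p$ and $\sqrt{n/p}$ coincide, both equal to $n^{5/12}$ — the optimization you anticipated. So the structure of your argument is the paper's argument; the one step you flagged as uncertain (which auxiliary log-concave estimate to feed in at small $p$) is filled by Lee-Vempala, and the routine bookkeeping of the cutoff confirms the exponent.
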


\begin{proof}
We follow the ideas expained after the proof of Proposition 7 in \cite{LaIMA}, but instead of Eldan's bound on
the Cheeger constant \cite{El} we use the recent result of Lee and Vempala  \cite{LV}.  

Since the concentration inequality is invariant with respect to linear transformation we may assume that $\mu$
is isotropic. Then in particular $\calz_p(\mu)\supset \calz_2(\mu)= B_2^n$. 

By Proposition 2.7 in \cite{LW} $\mathrm{CI}(\beta)$ may be equivalently stated as
\[
\forall_{p\geq 2}\ \forall_{A\in \mathcal{B}(\er^n)}\
\mu(A+\beta \calz_p(\mu))\geq \min\left\{\frac{1}{2},e^p\mu(A)\right\}.
\]
To show the above bound with $\beta=Cn^{5/12}$ we consider two cases.

i) If $2\leq p\leq n^{1/6}$ then
\begin{align*}
\mu(A+Cn^{5/12}\calz_p(\mu))
&\geq \mu(A+Cn^{1/4}pB_2^n)
\geq \min\left\{\frac{1}{2},e^{p}\mu(A)\right\},
\end{align*}
where the last inequality follows by the Lee-Vempala \cite{LV} $Cn^{1/4}$ bound on the Cheeger constant. 

ii) If $p\geq \max\{2,n^{1/6}\}$ then observe first that Theorem \ref{thm_main} (see Remark \ref{rem_zp}) yields
\[
\mu\left(2e^{3/2}\sqrt{\frac{n+p}{p}}\calz_p(\mu)\right)\geq 1-e^{-p}.
\] 
Therefore Lemma 9 in \cite{LaIMA} gives
\[
\mu(A+Cn^{5/12}\calz_p(\mu))\geq \mu\left(A+18e^{3/2}\sqrt{\frac{n+p}{p}}\calz_p(\mu)\right)
\geq \min\left\{\frac{1}{2},e^{p}\mu(A)\right\}.
\]
\end{proof}

\subsection*{$\textbf{p}$-summing norms of finite rank operators}

The theory of absolutely summing operators is an important part of the modern Banach space theory and found
numerous powerful applications in harmonic analysis, approximation theory, probability theory and operator theory
\cite{DJT}. 

Recall that a linear operator $T$ between Banach spaces $F_1$ and $F_2$ is $p$-summing
if there exists a constant $\alpha<\infty$, such that 
\[
\forall_{x_1,\ldots x_m\in F_1}\
\left(\sum_{i=1}^m\|Tx_i\|^p\right)^{1/p}
\leq \alpha \sup_{x^*\in F_1^*,\|x^*\|\leq 1}\left(\sum_{i=1}^m|x^*(x_i)|^p\right)^{1/p}.
\] 
The smallest constant $\alpha$ in the above inequality is called the $p$-summing norm of $T$ and denoted
by $\pi_p(T)$. For a Banach space $F$ by $\pi_p(F)$ we denote the $p$-summing constant of the identity map of $F$. 

It is well known that $\pi_p(F)<\infty$ if and only if $F$ is finite dimensional. Moreover
$\pi_2(F)=\sqrt{\dim F}$ (see \cite[Theorem 16.12.3]{Ga}). Summing constants of some finite dimensional
spaces were computed by Gordon in \cite{Go}. In particular he showed that 
\[
\pi_p(\ell_2^n)=\left(\Ex|U_1|^p\right)^{-1/p}\sim \sqrt\frac{n+p}{p}. 
\]

Immediate consequence of our main result is that up to a universal constant Hilbert space has the largest 
$p$-summing constant among all normed spaces of fixed dimension.

\begin{cor}
\label{cor:abssum1}
For any finite dimensional Banach space $F$ and $p\geq 2$ we have
\[
\pi_p(F)\leq 2\sqrt{2}\sqrt{\frac{\mathrm{dim}F+p}{p}}\leq C\pi_p(\ell_2^{\mathrm{dim}F}).
\]
\end{cor}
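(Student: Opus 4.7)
The approach is to recognize the first inequality as a Banach-space reformulation of Lemma \ref{lem:main}(ii), and to deduce the second from the formula for $\pi_p(\ell_2^n)$ quoted just before the corollary.

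For the first inequality, I would set $n=\dim F$, fix a linear isomorphism identifying $F$ with $\er^n$ (transporting both $\|\cdot\|$ and $\|\cdot\|_*$), and choose a countable dense subset $\{t_i\}_{i\ge 1}$ of the compact unit ball $B_{F^*}\subset\er^n$. For arbitrary $x_1,\dots,x_m\in F$, Lemma \ref{lem:main}(ii) applied to the finite truncations $\{t_1,\dots,t_k\}$ and $\{x_1,\dots,x_m\}$ gives
\[
\Big(\sum_{j=1}^m\sup_{1\le i\le k}|\langle t_i,x_j\rangle|^p\Big)^{1/p}\le C_{n,p}\sup_{1\le i\le k}\Big(\sum_{j=1}^m|\langle t_i,x_j\rangle|^p\Big)^{1/p}.
\]
Letting $k\to\infty$, monotone convergence turns the left-hand side into $\big(\sum_j\|x_j\|^p\big)^{1/p}$, while continuity and density turn the right-hand side into $C_{n,p}\sup_{\|x^*\|\le 1}\big(\sum_j|x^*(x_j)|^p\big)^{1/p}$. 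Matching this with the definition of $\pi_p$ yields $\pi_p(F)\le C_{n,p}$, and Proposition \ref{lem:p=2k}(c) gives $C_{n,p}\le 2\sqrt{e}\sqrt{(n+p)/p}$; the slightly sharper constant $2\sqrt{2}$ in the statement would require a small tightening of the bound $\binom{n+m-1}{m}^{1/m}\le e(n+m-1)/m$ within the range $p\in(2m,2m+2]$.

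The second inequality is immediate from Gordon's identity $\pi_p(\ell_2^n)=(\E|U_1|^p)^{-1/p}$ together with the sharp asymptotics $\pi_p(\ell_2^n)\sim\sqrt{(n+p)/p}$ recalled above: any universal $c>0$ with $\pi_p(\ell_2^n)\ge c\sqrt{(n+p)/p}$ makes $C=2\sqrt{2}/c$ work. The main obstacle is conceptual rather than technical --- spotting that the defining condition of $\pi_p(\mathrm{id}_F)$ is literally Lemma \ref{lem:main}(ii) once the $t_i$'s range over a dense subset of $B_{F^*}$; after that, only a routine density/compactness passage remains.
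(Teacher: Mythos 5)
Your approach is essentially the paper's: the official proof applies Theorem \ref{thm_main} directly with $X$ uniform on a finite subset of $F$ and $T$ the unit ball of $F^*$ (your detour through Lemma \ref{lem:main}(ii) plus a density/limit step is an equivalent packaging, since the theorem already permits an arbitrary nonempty $T$), and reads off the second inequality from the displayed formula $\pi_p(\ell_2^n)=(\E|U_1|^p)^{-1/p}\sim\sqrt{(n+p)/p}$. You are also right to flag the constant: what the argument actually delivers is $2\sqrt{e}$, and the next corollary in the paper cites Corollary \ref{cor:abssum1} with the constant $2\sqrt{e}$, so the $2\sqrt{2}$ appearing in the statement is evidently a typo rather than a sharpening you would need to supply.
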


\begin{proof}
We apply Theorem \ref{thm_main} for random vectors uniformly distributed on finite subsets of $F$ 
and  $T$ the unit ball in $F^*$. 
\end{proof}

Using the ideal properties we get a bound for $p$-summing constant of finite rank operators.

\begin{cor}
Let $T$ be a finite rank linear operator between Banach spaces $F_1$ and $F_2$.
Then the $p$-absolutely summing constant of $T$ satisfies 
\[
\pi_p(T)\leq 2\sqrt{e}\sqrt{\frac{\mathrm{rk}(T)+p}{p}}\|T\|.
\]
\end{cor}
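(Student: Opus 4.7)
The plan is to reduce to Corollary \ref{cor:abssum1} via the standard ideal property of $p$-summing operators. Write $r:=\mathrm{rk}(T)$ and let $G:=T(F_1)\subset F_2$; this is an $r$-dimensional, hence closed, subspace of $F_2$, equipped with the inherited norm. Let $J:G\hookrightarrow F_2$ denote the inclusion, which is an isometric embedding and therefore has $\|J\|=1$, and let $\tilde T:F_1\to G$ be $T$ viewed as an operator into its image, so that $\|\tilde T\|=\|T\|$ and $T=J\circ\tilde T$.

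Next I would apply the two standard ideal inequalities for the $p$-summing norm, namely $\pi_p(AB)\le\|A\|\pi_p(B)$ and $\pi_p(AB)\le\pi_p(A)\|B\|$, in succession. Factoring $\tilde T$ through the identity of $G$ as $\tilde T=\mathrm{id}_G\circ \tilde T$ gives
\[
\pi_p(T)\ =\ \pi_p(J\tilde T)\ \le\ \|J\|\,\pi_p(\tilde T)\ \le\ \|J\|\,\pi_p(\mathrm{id}_G)\,\|\tilde T\|\ =\ \pi_p(G)\,\|T\|.
\]

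Finally I would invoke Corollary \ref{cor:abssum1} applied to the $r$-dimensional Banach space $G$ to bound $\pi_p(G)\le 2\sqrt{e}\sqrt{(r+p)/p}$, which yields the stated inequality. There is essentially no obstacle once Corollary \ref{cor:abssum1} is available: the only points to check are that $G$ carries a genuine Banach space norm (automatic since it is finite dimensional), that $\|\tilde T\|=\|T\|$ (which follows from the isometric nature of $J$), and that the ideal property is being applied in the correct direction. The conceptual content of the corollary is thus simply that the finite rank of $T$ alone, rather than the dimensions of $F_1$ or $F_2$, already forces the Hilbert-space-type bound on $\pi_p(T)$ coming from Theorem \ref{thm_main}.
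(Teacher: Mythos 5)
Your proof is correct and follows essentially the same route as the paper: factor $T$ through its image $G=T(F_1)$, apply the ideal property of the $p$-summing norm to reduce to $\pi_p(\mathrm{id}_G)=\pi_p(G)$, and then invoke Corollary \ref{cor:abssum1} for the $\mathrm{rk}(T)$-dimensional space $G$. The paper's argument is the same decomposition $T=i\circ I\circ\tilde T$ with the same chain of inequalities.
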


\begin{proof}
Let $F_3:=T(F_1)$, then $F_3$ is a subspace of $F_2$ of dimension $\mathrm{rk}(T)$. Observe that
$T=i\circ I\circ \tilde{T}$ where $\tilde{T}$ is $T$ considered as an operator between $F_1$ and $F_3$,
$I$ is the identity map on $F_3$ and $i$ is the embedding of $F_3$ into $F_1$. The operator ideal properties of the  $p$-summing norm and Corollary \ref{cor:abssum1} imply
\begin{align*}
\pi_p(T)
&\leq \|i\|\pi_p(I)\|\tilde{T}\|=\pi_p(F_3)\|T\|
\leq 2\sqrt{e}\sqrt{\frac{\mathrm{dim}(F_3)+p}{p}}\|T\|
\\
&=2\sqrt{e}\sqrt{\frac{\mathrm{rk}(T)+p}{p}}\|T\|.
\end{align*}
\end{proof}

\section{Open Questions}
\label{sec:openq}

Corollary 6 in \cite{La} states that for unconditional log-concave vectors in $\er^n$ and $2\leq p\leq n$
we have
\[
\frac{1}{C}\sqrt{\frac{n}{p}}\leq \Ex \|X\|_{\calz_p(X)}
\leq \left(\Ex  \|X\|_{\calz_p(X)}^{\sqrt{np}}\right)^{1/\sqrt{np}}\leq C\sqrt{\frac{n}{p}}.
\]
We do not know whether such bounds holds without unconditionality assumptions. We are only able 
to show the following weaker lower bound. Recall that the isotropic constant of a centered logconcave 
vector $X$ with density $g$ is defined as
\[
L_X:=(\sup_x g(x))^{1/n}(\mathrm{det\ Cov}(X))^{1/(2n)}.
\]
It is known that for all log-concave vectors $L_X\geq 1/C$, the famous open conjecture, due to Bourgain \cite{Bou}, states that $L_X\leq C$ (see \cite{AGM,BGVV} for more details and discussions of known upper bounds).

\begin{prop}
\label{prop:lowZp}
For any centered log-concave $n$-dimensional random vector with nondegenerate covariance matrix
we have
\[
\Ex \|X\|_{\calz_p(X)}\geq \frac{1}{CL_X}\sqrt{\frac{n}{p}} \quad \mbox{ for }1\leq p\leq n,
\]
where $L_X$ is the isotropic constant of $X$.
\end{prop}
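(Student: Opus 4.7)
My plan is to lower bound $\|X\|_{\calz_p(X)}$ on a set of probability at least $1/2$, by combining the trivial density-volume inequality $\Pr(X\in A)\leq |A|\sup g$ with Paouris's upper bound on the volume of the $L_p$-centroid body.

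First I would reduce to the isotropic case. Both sides of the asserted bound are invariant under invertible linear transformations of $X$: if $Y=TX$ with $T$ invertible, a short computation gives $h_{\calz_p(Y)}(t)=h_{\calz_p(X)}(T^*t)$, hence $\calz_p(Y)=T\calz_p(X)$ and $\|Y\|_{\calz_p(Y)}=\|X\|_{\calz_p(X)}$ almost surely; meanwhile $L_X$ is a linear invariant by definition. Replacing $X$ by $(\mathrm{Cov}\,X)^{-1/2}X$, I may thus assume $\mathrm{Cov}\,X=I$, in which case the density $g$ of $X$ satisfies $\sup g=L_X^n$.

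Next I would invoke Paouris's volume estimate (see e.g.\ \cite{BGVV}): for every isotropic log-concave probability measure $\mu$ on $\er^n$ and every $1\leq p\leq n$,
\[
|\calz_p(\mu)|^{1/n}\leq C\sqrt{p/n}.
\]
Combining this with $\Pr(X\in A)\leq L_X^n|A|$ applied to $A=r\calz_p(X)$ yields
\[
\Pr\bigl(\|X\|_{\calz_p(X)}\leq r\bigr)=\Pr\bigl(X\in r\calz_p(X)\bigr)\leq L_X^n r^n |\calz_p(X)|\leq \bigl(CrL_X\sqrt{p/n}\bigr)^n.
\]
Choosing $r=(2CL_X)^{-1}\sqrt{n/p}$ makes the right-hand side at most $2^{-n}\leq 1/2$. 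Hence $\|X\|_{\calz_p(X)}\geq r$ with probability at least $1/2$, so that $\Ex\|X\|_{\calz_p(X)}\geq r/2$, which is the desired lower bound.

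The only nontrivial input is Paouris's volume estimate for $\calz_p$; the reduction to isotropic position, the density-volume inequality, and the Markov-type conclusion are all routine. The main conceptual step is to realise that the classical volume upper bound for the centroid body is already enough, and no finer concentration estimate is needed.
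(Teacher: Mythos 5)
Your proof is correct and follows essentially the same route as the paper's: reduce to isotropic position where the density is bounded by $L_X^n$, bound $\Pr\bigl(X\in r\calz_p(X)\bigr)$ by the density-volume inequality together with Paouris's volume estimate for $\calz_p$-bodies, and finish with a Markov-type argument.
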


\begin{proof}
Since the assertion is linearly invariant we may and will assume that $X$ is isotropic, i.e. it has the identity
covariance matrix. The density of $X$ is then bounded by $L_X^n$, hence
\begin{align*}
\Pr\left(\|X\|_{\calz_p(X)}\leq t\sqrt{n/p}\right)
&=\Pr\left(X\in t\sqrt{n/p}\calz_p(X)\right)
\\
&\leq L_X^n\mathrm{vol}\left(t\sqrt{n/p}\calz_p(X)\right)\leq (C_1tL_X)^n,
\end{align*}
where the last estimate follows by the Paouris \cite{Pa} bound on the volume of $\calz_p$-bodies 
(see also \cite[Theorem 5.1.17]{BGVV}).

Thus
\[
\Ex\|X\|_{\calz_p(X)}
\geq \frac{1}{2C_1L_X}\sqrt\frac{n}{p}\Pr\left(\|X\|_{\calz_p(X)}> \frac{1}{2C_1L_X}\sqrt\frac{n}{p}\right)
\geq \frac{1}{4C_1L_X}\sqrt\frac{n}{p}.
\]
\end{proof}

In the last years it was showed that various constants related to the $n$-dimensional log-concave measures
(isotropic constant, Cheeger constant, thin-shell constant) are bounded by $Cn^{1/4}$. We think that the same 
should be true for the $\mathrm{CI}$ constant.

Finally, we do not know whether the optimal constants in \eqref{eq:momZp} are attained for rotationally invariant
random vectors or, in other words, whether conditions in Lemma \ref{lem:main} hold with 
$C_{n,p}=(\Ex|U_1|^p)^{1/p}$, where $U_1$ is the first coordinate of the random vector $U$ uniformly distributed on the unit sphere. Equivalently one may ask whether for any finite dimensional Banach space $F$ one has
$\pi_p(F)\leq \pi_p(\ell_2^{\dim F})$.

\end{document}